\documentclass{amsart}
 \usepackage{amssymb}
\usepackage{calligra}
\input xy
\xyoption{all}
\newtheorem{theorem}{Theorem}[section]

\theoremstyle{definition}
\newtheorem{definition}[theorem]{Definition}
\newtheorem{example}[theorem]{Example}

\theoremstyle{remark}
\newtheorem{remark}[theorem]{Remark}

\theoremstyle{notation}

\theoremstyle{proposition}
\newtheorem{proposition}[theorem]{Proposition}

\theoremstyle{corollary}
\newtheorem{corollary}[theorem]{Corollary}

\numberwithin{equation}{section}



\begin{document}

\title{Pre-Lie algebras in positive characteristic}


\author{Ioannis Dokas}
\address{}
\email{dokas@ucy.ac.cy}

\subjclass{17D25, 17B50, 18C15}



\keywords{Restricted Lie algebra, dendriform algebra, pre-Lie
algebra, algebras with divided powers over an operad}

\begin{abstract}
In prime characteristic we introduce the notion of restricted
pre-Lie algebras. We prove in the pre-Lie context the analogue to
Jacobson's theorem for restricted Lie algebras. In particular, we
prove that any dendriform algebra over a field of positive
characteristic is a restricted pre-Lie algebra. Thus we obtain that
Rota-Baxter algebras and quasitriangular algebras are restricted
pre-Lie algebras. Moreover, we prove that the free
$\Gamma(\calligra{preLie})$-algebra is a restricted pre-Lie algebra,
where $\calligra{preLie}$ denotes the pre-Lie operad. Finally, we
define the notion of restricted enveloping dendriform algebra and we
construct a left adjoint functor for the functor $(-)_{p-preLie}:
Dend \rightarrow p-preLie$.
 \end{abstract}

\maketitle





\section*{\textbf{Introduction}}

In many cases, in the theory of Lie algebras over a field of prime
characteristic, the notion of Lie algebras has to be replaced by the
notion of restricted Lie algebras. The theory of restricted Lie
algebras was developed by N. Jacobson in \cite{Jac}. Restricted Lie
algebras arise naturally in positive characteristic. Indeed, by a
theorem of N. Jacobson, the associated Lie algebra $A_{Lie}$ of any
associative algebra $A$ over a field of characteristic $p\neq 0$, is
endowed with the structure of a restricted Lie algebra
$(A_{Lie},[-,-],F)$, where $F$ denotes the Frobenius map.

The structure of pre-Lie algebras appear in many fields of
mathematics under the names of left symmetric algebras, right
symmetric algebras or Vinberg algebras. Pre-Lie algebras are
Lie-admissible algebras which generalize associative algebras. The
notion of a pre-Lie algebra has been introduced by Gerstenhaber in
\cite{Ger}. In connection to the theory of operads, if $\mathcal{P}$
is an operad the space $\bigoplus_{n} P(n)$ admits a structure of a
pre-Lie algebra \cite{LV}.

In this paper we study pre-Lie algebras in prime characteristic. In
the pre-Lie context the role of associative algebras is now played
by dendriform algebras introduced by J.-L. Loday in \cite{L3}.
Dendriform algebras are algebras with two binary operations, which
dichotomize the notion of associative algebras. Moreover, any
dendriform algebra structure, induces a pre-Lie algebra structure.
Therefore there is a functor $(-)_{preLie}: Dend \rightarrow preLie$
from the category of dendriform algebras to the category of pre-Lie
algebras.

In section $2$, in a similar way to the case of restricted Lie
algebras, we introduce the notion of \emph{restricted pre-Lie
algebra} generalizing the definition given by A. Dzhumadil'daev in
\cite{Dz}. We define a restricted pre-Lie algebra as a pair
$(P,\{-,-\},(-)^{[p]})$ where $P$ is a pre-Lie algebra together with
a map $(-)^{[p]}: P\rightarrow P$ which verifies specific relations.
From the definition of a restricted pre-Lie algebra
$(P,\{-,-\},(-)^{[p]})$, the induced Lie algebra $(P_{Lie},[-,-])$
is actually a restricted Lie algebra $(P_{Lie},[-,-],(-)^{[p]})$.
Therefore we obtain a functor from the category of restricted
pre-Lie algebras to the category of restricted Lie algebras.

Next, in Theorem $2.11$ we prove the analogue to Jacobson's theorem
in the pre-Lie context: Any dendriform algebra over a field of
characteristic $p\neq 0$ is a restricted pre-Lie algebra. Thus we
obtain a functor $(-)_{p-preLie}: Dend \rightarrow p-preLie$, from
the category of dendriform algebras to the category of restricted
pre-Lie algebras.

In section $3$, we relate restricted pre-Lie algebras with
$\Gamma(\calligra{preLie})$-algebras, where $\calligra{preLie}$
denotes the pre-Lie operad. In particular, if $\mathcal{P}$ is an
operad, then B. Fresse in \cite{Fre1} constructs the monad
$\Gamma(\mathcal{P})$ in $Vect$. Moreover, he defines the notion of
\emph{algebra with divided powers} as an algebra over the monad
$\Gamma(\mathcal{P})$. It follows from Cartan's theory (see
\cite{Car}, \cite{Fre1}), that the notion of
$\Gamma(\calligra{Com})$-algebras coincides with the notion of
commutative algebras with divided powers, where $\calligra{Com}$
denotes the commutative operad. If $\calligra{Lie}$ denotes the Lie
operad, then B. Fresse in \cite{Fre1} proves that a
$\Gamma(\calligra{Lie})$-algebra is nothing else than a restricted
Lie algebra. In section $3$ we prove that the free
$\Gamma(\calligra{preLie})$-algebra is a restricted pre-Lie algebra.

In section $4$, we give examples of restricted pre-Lie algebras. We
note that any example in the sense of the definition of A.
Dzhumadil'daev is an example of a restricted pre-Lie algebra. In
particular, Witt algebras are restricted pre-Lie algebras. Besides,
there is a problem of finding those Lie algebras which admit a
pre-Lie structure. In contrast to the characteristic $0$ case, D.
Burde proves in \cite{Bur1} that the simple Lie algebra
$\calligra{sl}(2,k)$, admits pre-Lie algebra structures if and only
if $char\,k=3$. We prove that the simple Lie algebra
$\calligra{sl}(2,k)$, actually admits a structure of a restricted
pre-Lie algebra. Moreover, we prove by Propositions $4.1$ and $4.2$
that Rota-Baxter algebras and quasitriangular algebras over a field
of positive characteristic are restricted pre-Lie algebras. In
section $5$ we define for $(P,\{-,-\},(-)^{[p]})$ a restricted
pre-Lie algebra, the notion of \emph{restricted enveloping
dendriform algebra} $U_{p}(P)$ and we construct a left adjoint
functor for the functor $(-)_{preLie}: Dend \rightarrow p-preLie$.

\bigskip

 \textbf{Acknowledgements.}\; I am grateful to the referee for important
 suggestions and comments.

\section{Restricted Lie algebras}

In order to study structures with additional operations (see for
example \cite{DL}), many ideas and inspiration come from the
category $p-Lie$ of restricted Lie algebras over a field $k$ of
characteristic $p\neq 0$. We recall Jacobson's definition of
restricted Lie algebras.

\begin{definition}
A \emph{restricted} Lie algebra $(L,(-)^{[p]})$ over a field $k$
of characteristic $p\neq 0$ is a Lie algebra $L$ over $k$ together with a
map $(-)^{[p]}: L \rightarrow L$ called the $p$-map such that the following relations
hold:
\begin{align}
(\alpha x)^{[p]} &=  \alpha^{p}\;x^{[p]}\\
[x^{[p]},y ]     &=  [\underbrace{x,[x,[\cdots [x}_{p},y]]]] \\
(x+y)^{[p]} &= x^{[p]}+y^{[p]}+\sum_{i=1}^{p-1}s_{i}(x,y)
\end{align}
where $i s_{i}(x,y)$ is the coefficient of $\lambda^{i-1}$ in the
formal $(p-1)$-fold product $$[\underbrace{\lambda x+y,[\cdots [\lambda x
+y}_{p-1},x]\cdots]]$$
where $x,y \in L$ and $\alpha \in k$.
\end{definition}

\begin{remark}
We note that if $(L,(-)^{[p]})$ is a restricted Lie algebra and $H$ denotes the Lie sub-algebra of $L$ generated by $x,y \in L$, then $s_{i}(x,y) \in H^{p}$
where $H^{i}$ is defined by $H^{1}=H$ and $H^{i}:=[H^{i-1},H]$.
\end{remark}

The next theorem due to N. Jacobson \cite{Jac} justifies the above definition.

\begin{theorem}
Let $(A,\star)$ be an associative algebra over a field $k$ of
characteristic $p\neq 0$. Then $(A,[-,-],F)$ is a restricted Lie
algebra where for all $x,y\in A$ we denote by $[x,y]:=x\star y-
y\star x$, and $F$ is the Frobenius map $F: A\rightarrow A$ given by
$F(x):=x^{\star p}$.
\end{theorem}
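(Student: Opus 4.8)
The plan is to verify the three axioms (1.1)--(1.3) directly for the map $F(x) = x^{\star p}$, after recording that $(A,[-,-])$ is a Lie algebra: antisymmetry of $[x,y]=x\star y-y\star x$ is immediate, and the Jacobi identity is the standard consequence of the associativity of $\star$. Axiom (1.1) is then trivial, since $\alpha\in k$ is central and $(\alpha x)^{\star p}=\alpha^{p}x^{\star p}$.

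The device I would use for the remaining axioms is the pair of left and right multiplication operators $L_a,R_a\in\mathrm{End}_k(A)$ given by $L_a(z)=a\star z$ and $R_a(z)=z\star a$. Associativity makes $L_a$ and $R_a$ commute, and one has $L_a^{n}=L_{a^{\star n}}$, $R_a^{n}=R_{a^{\star n}}$, and $\mathrm{ad}\,a=L_a-R_a$. For axiom (1.2) I would expand $(\mathrm{ad}\,x)^{p}=(L_x-R_x)^{p}$ inside the commutative subalgebra of $\mathrm{End}_k(A)$ generated by $L_x$ and $R_x$; in characteristic $p$ the freshman's dream gives $(L_x-R_x)^{p}=L_x^{p}-R_x^{p}=L_{x^{\star p}}-R_{x^{\star p}}=\mathrm{ad}\,(x^{\star p})$. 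Evaluating at $y$ gives $(\mathrm{ad}\,x)^{p}(y)=[x^{\star p},y]$, which is exactly (1.2).

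The main work is axiom (1.3), which I expect to be the principal obstacle. I would pass to the polynomial algebra $A[\lambda]$ over a central indeterminate $\lambda$ and set $z=\lambda x+y$. The crucial lemma is $\sum_{j=0}^{p-1} z^{\star j}\star x\star z^{\star(p-1-j)}=(\mathrm{ad}\,z)^{p-1}(x)$, valid in characteristic $p$. To prove it I would write the left-hand side as $\sum_{j} L_z^{j}R_z^{p-1-j}(x)$ and invoke the universal polynomial identity $\sum_{j=0}^{p-1} a^{j}b^{p-1-j}=(a-b)^{p-1}$ in $\mathbb{F}_p[a,b]$ (a consequence of $a^{p}-b^{p}=(a-b)^{p}$); specializing to the commuting operators $a=L_z$, $b=R_z$ yields $\sum_{j} L_z^{j}R_z^{p-1-j}=(L_z-R_z)^{p-1}=(\mathrm{ad}\,z)^{p-1}$. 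Moreover the left-hand side of the lemma is precisely the formal derivative $\frac{d}{d\lambda}(z^{\star p})$, since $\frac{dz}{d\lambda}=x$.

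To finish, I would expand $z^{\star p}=(\lambda x+y)^{\star p}=\sum_{i=0}^{p}\lambda^{i}c_i$ with $c_i\in A$, where $c_0=y^{\star p}$ and $c_p=x^{\star p}$. Differentiating gives $\frac{d}{d\lambda}(z^{\star p})=\sum_{i=1}^{p} i\,\lambda^{i-1}c_i$, whose $\lambda^{p-1}$-coefficient $p\,c_p$ vanishes in characteristic $p$. Comparing this with the defining expansion of the $s_i(x,y)$, namely that $i\,s_i(x,y)$ is the coefficient of $\lambda^{i-1}$ in $(\mathrm{ad}\,z)^{p-1}(x)$, and using that $i$ is invertible modulo $p$ for $1\le i\le p-1$, I obtain $c_i=s_i(x,y)$ for those $i$. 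Setting $\lambda=1$ then yields $(x+y)^{\star p}=x^{\star p}+y^{\star p}+\sum_{i=1}^{p-1}s_i(x,y)$, which is (1.3). The delicate points are the commuting-operator identity and the bookkeeping in the derivative, so that is where I would focus the care.
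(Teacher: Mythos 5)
The paper gives no proof of this theorem at all: it is stated as Jacobson's classical result with only a citation to \cite{Jac}, so there is no in-paper argument to compare against. Your proof is correct and complete, and it is precisely the classical argument that the citation points to: axiom (1.2) via the commuting operators $L_x,R_x$ and the freshman's dream $(L_x-R_x)^p=L_x^p-R_x^p$, and axiom (1.3) via the polynomial identity $\sum_{j=0}^{p-1}a^jb^{p-1-j}=(a-b)^{p-1}$ in $\mathbb{F}_p[a,b]$ applied to $a=L_z$, $b=R_z$, combined with formal differentiation of $(\lambda x+y)^{\star p}$ and invertibility of $i$ for $1\le i\le p-1$; all the delicate steps (cancellation of $a-b$ in the domain $\mathbb{F}_p[a,b]$, the Leibniz rule for $d/d\lambda$, the vanishing of the top coefficient $p\,c_p$) check out. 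It is worth noting that your treatment of (1.2) is exactly the device the paper itself uses later in proving the pre-Lie analogue (Theorem 2.11), where $L_x(y)=x\succ y$ and $R_x(y)=y\prec x$ commute by the middle dendriform axiom, so your write-up is fully consonant with the paper's methods.
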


\section{Restricted pre-Lie algebras}

We recall that M. Gerstenhaber has defined the notion of pre-Lie
algebra in \cite{Ger}. In particular, the Lie bracket involved in
the Gerstenhaber structure on Hochschild cohomology comes from a
pre-Lie structure. Moreover, pre-Lie algebras appeared in E.
Vinberg's work on convex homogeneous cones \cite{Vi} and they play
an important role in the differential geometry of flat manifolds
\cite{Milnor2}. Besides, the theory of pre-Lie algebras has received
a great impulse, because of its applications to renormalization, as
formalized by A. Connes and D. Kreimer \cite{Co}. For a survey on
pre-Lie algebras the reader may consult the paper of D. Burde
\cite{Bur2}.

\begin{definition}
A \emph{left pre-Lie algebra} $(A,{-,-})$ is a $k$-vector space equipped with a binary operation $\{-,-\}: A\rightarrow A$ such that the following relation holds:
$$\{x,\{y,z\}\}-\{\{x,y\},z\}=\{y,\{x,z\}\}-\{\{y,x\},z\}.$$
\end{definition}

If we denote by $a(x,y,z)$ the associator then the above relation is
written $$a(x,y,z)=a(y,x,z).$$ For this reason in the literature,
left pre-Lie algebras are also called, left symmetric algebras. As
we mentioned, the category of pre-Lie algebras is Lie admissible. In
particular we have the following proposition.

\begin{proposition}
If $(A, \{-,-\})$ is a pre-Lie algebra, then $A$ is equipped with a
structure of a Lie algebra with a Lie bracket given by
$[x,y]:=\{x,y\}-\{y,x\}$.
\end{proposition}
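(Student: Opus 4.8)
The plan is to verify the three defining properties of a Lie bracket for the proposed operation $[x,y]:=\{x,y\}-\{y,x\}$: bilinearity, antisymmetry, and the Jacobi identity. Bilinearity is inherited directly from the bilinearity of $\{-,-\}$, and antisymmetry is immediate, since interchanging $x$ and $y$ turns $\{x,y\}-\{y,x\}$ into its negative. Hence the entire content of the proposition lies in the Jacobi identity
\[
[[x,y],z]+[[y,z],x]+[[z,x],y]=0,
\]
and this is where the pre-Lie relation must be used.

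To establish it, I would expand each of the three double brackets by means of the definition $[u,v]=\{u,v\}-\{v,u\}$. This produces twelve terms, each of one of the two shapes $\{\{a,b\},c\}$ or $\{a,\{b,c\}\}$. The key observation is that the associator $a(x,y,z)=\{x,\{y,z\}\}-\{\{x,y\},z\}$ groups exactly these two shapes together, so the twelve terms can be reorganized into six associators. Carrying out the bookkeeping, one finds that the left-hand side of the Jacobi identity equals
\[
\big(a(y,x,z)-a(x,y,z)\big)+\big(a(z,y,x)-a(y,z,x)\big)+\big(a(x,z,y)-a(z,x,y)\big).
\]

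Once the expression is in this form, the conclusion is immediate: each of the three parenthesized differences compares an associator with the associator obtained by transposing its first two arguments, and by the defining pre-Lie relation $a(u,v,w)=a(v,u,w)$ (Definition of left pre-Lie algebra) every such difference vanishes. Therefore the whole sum is zero and Jacobi holds. The only genuine obstacle here is the combinatorial bookkeeping: one must track signs carefully and correctly match the twelve expanded terms with the six associators (equivalently, verify that the cyclic expansion really collapses to the three transposition-differences above). There is no deeper difficulty, since antisymmetry and bilinearity are formal and the pre-Lie axiom does exactly the work needed for Jacobi.
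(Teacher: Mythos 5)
Your proposal is correct, and I have checked the crucial bookkeeping step: expanding the cyclic sum $[[x,y],z]+[[y,z],x]+[[z,x],y]$ via $[u,v]=\{u,v\}-\{v,u\}$ does produce exactly twelve terms, and they regroup, with the signs you claim, as
\[
\bigl(a(y,x,z)-a(x,y,z)\bigr)+\bigl(a(z,y,x)-a(y,z,x)\bigr)+\bigl(a(x,z,y)-a(z,x,y)\bigr),
\]
each difference vanishing by the left-symmetry $a(u,v,w)=a(v,u,w)$ of the associator. Note, however, that the paper does not actually carry out any argument here: its entire proof is the citation ``See Lemma $1.4.2$ in \cite{LV}.'' So you have not taken a different route so much as supplied the verification the paper outsources; your computation is precisely the standard proof of Lie-admissibility of pre-Lie algebras that the cited lemma contains. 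The trade-off is the usual one: the citation keeps the paper short and defers to a canonical reference, while your self-contained expansion makes transparent exactly where the pre-Lie axiom enters --- namely, that the Jacobi defect is a signed sum of associator differences across transpositions of the first two arguments, and left-symmetry kills each one.
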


\begin{proof}
See Lemma $1.4.2$ in \cite{LV}.
\end{proof}

\emph{Free pre-Lie algebras.} Let $X$ be a set, then F. Chapoton and
M. Livernet in \cite{CL} give an explicit description of the free
pre-Lie algebra $\rm{preLie}(X)$ generated by $X$ in terms of rooted
trees labeled by $X$.

Similarly to the case of restricted Lie algebras, we propose the
following definition for restricted pre-Lie algebras.

\begin{definition}
\emph{A restricted pre-Lie algebra} $(P,(-)^{[p]})$, is a pre-Lie algebra $P$ over a field $k$ of characteristic $p\neq 0$ together with a
map $(-)^{[p]}: P \rightarrow P$ called the $p$-map such that:
\begin{align}
(\alpha x)^{[p]}  &=  \alpha^{p}\;x^{[p]}\\
\{x^{[p]},y \}    &=  \{\underbrace{x,\{x,\{\cdots \{x}_{p},y\}\}\} \\
\{y,x^{[p]}\}     &=  \{\underbrace{x,\{x,\{\cdots \{x}_{p},y\}\}\}+[\underbrace{x,[x,[\cdots [x}_{p},y]]]]  \\
(x+y)^{[p]} &= x^{[p]}+y^{[p]}+\sum_{i=1}^{p-1}s_{i}(x,y)
\end{align}
where $[x,y]:=\{x,y\}-\{y,x\}$ is the induced bracket, $s_{i}(x,y)$
are defined as in the Definition $1.1$ and $x,y \in P,\;\alpha \in
k$.
\end{definition}

If $(P,(-)^{[p]})$ and $(P',(-)^{[p]})$ are restricted pre-Lie algebras, then a pre-Lie morphism $f: P\rightarrow P'$ is called restricted if $f(x^{[p]})=f(x)^{[p]}$. We denote by $p-\textrm{preLie}$ the category of restricted pre-Lie algebras over $k$.

From the definition above we see that if $(P,\{-,-\},(-)^{[p]})$ is a restricted pre-Lie algebra, then
the induced Lie algebra is a restricted Lie algebra $(P,[-,-],(-)^{[p]})$. Therefore there is a functor from the category of restricted pre-Lie algebras to
the category of restricted Lie algebras and we obtain the following proposition.

\begin{proposition}
The following diagram of categories of algebras is commutative
\[
\xymatrix{p-\rm{preLie} \ar[d]  \ar[r]  & p-\rm{Lie} \ar[d] \\
\rm{preLie} \ar[r]      & \rm{Lie} } \]
\end{proposition}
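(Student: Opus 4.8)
The plan is to make all four functors explicit and then check that the two composites from $p\textrm{-preLie}$ to $\textrm{Lie}$ coincide, first on objects and then on morphisms. Write $G\colon p\textrm{-preLie}\to p\textrm{-Lie}$ for the top horizontal functor, which sends $(P,\{-,-\},(-)^{[p]})$ to the restricted Lie algebra $(P,[-,-],(-)^{[p]})$ with $[x,y]:=\{x,y\}-\{y,x\}$; this is a well-defined functor by the discussion preceding the statement. Write $V\colon p\textrm{-preLie}\to\textrm{preLie}$ and $V'\colon p\textrm{-Lie}\to\textrm{Lie}$ for the two forgetful functors that discard the $p$-map, and $L\colon\textrm{preLie}\to\textrm{Lie}$ for the Lie-admissibility functor of Proposition 2.2, sending $(P,\{-,-\})$ to $(P,[-,-])$. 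Commutativity of the square is then exactly the assertion $V'\circ G=L\circ V$.

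On objects I would check that the two composites agree on the nose. Starting from $(P,\{-,-\},(-)^{[p]})$, the route $V'\circ G$ first forms the induced restricted bracket and then forgets the $p$-map, returning $(P,[-,-])$; the route $L\circ V$ first forgets the $p$-map and then applies Proposition 2.2, also returning $(P,[-,-])$. In both cases the underlying space is $P$ and the bracket is the identical expression $[x,y]=\{x,y\}-\{y,x\}$, so the two Lie algebras are literally equal.

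For morphisms I would observe that a restricted pre-Lie morphism $f\colon P\to P'$ is an underlying linear map that preserves $\{-,-\}$ and commutes with the two $p$-maps. Along either composite $f$ is carried to the same underlying linear map, and it is a Lie morphism for $[-,-]$ because $f(\{x,y\})=\{f(x),f(y)\}$ gives, after antisymmetrizing, $f([x,y])=f(\{x,y\})-f(\{y,x\})=[f(x),f(y)]$. Hence $V'\circ G$ and $L\circ V$ agree on morphisms as well, and the square commutes.

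The only step carrying genuine content — and thus the one I would treat most carefully — is the well-definedness of $G$, namely that the induced bracket together with the unchanged $p$-map really is a restricted Lie algebra; this is where the two mixed relations of a restricted pre-Lie algebra are used. Subtracting $\{y,x^{[p]}\}=\{x,\dots,\{x,y\}\}-[x,\dots,[x,y]]$ from $\{x^{[p]},y\}=\{x,\dots,\{x,y\}\}$ yields precisely Jacobson's relation $[x^{[p]},y]=[x,[x,\dots,[x,y]]]$, while the scaling and additivity relations pass over verbatim. Since this has already been recorded in the remark before the proposition, the commutativity itself reduces to the bookkeeping carried out above, and no real obstacle remains.
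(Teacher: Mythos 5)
Your proposal is correct and follows essentially the same route as the paper: the paper's justification is precisely the observation, stated in the paragraph preceding the proposition, that the induced bracket together with the unchanged $p$-map makes $(P,[-,-],(-)^{[p]})$ a restricted Lie algebra (your subtraction of relations $(2.2)$ and $(2.3)$ recovering Jacobson's identity is the content of that observation), after which commutativity of the square is immediate. You merely spell out the functorial bookkeeping on objects and morphisms that the paper leaves implicit.
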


\begin{remark}
The Definition $2.3$ of restricted pre-Lie algebras, generalizes the
Definition $1.3$ given by A. Dzhumadil'daev in \cite{Dz}. Indeed, in
\cite{Dz} a restricted pre-Lie algebra is defined as a pre-Lie
algebra $(P,\{-,-\})$ such that
$$\{x^{\{p\}},y\}=\{\underbrace{x,\{x,\{\cdots \{x}_{p},y\}\}\},$$ where $$x^{\{p\}}:=\{\underbrace{x,\{x,\{\cdots \{x,x}_{p}\}\}\}\}.$$
Moreover, by Corollary $2.4$ in \cite{Dz}, it is proved that the
induced Lie algebra is a restricted Lie algebra
$(P,[-,-],(-)^{\{p\}})$. We easily see that the relations of the
Definition $2.3$ are verified, thus a restricted pre-Lie algebra $P$
in terms of the Definition $1.3$ in \cite{Dz} is a restricted
pre-Lie algebra $(P,\{-,-\},(-)^{[p]})$ in terms of the Definition
$2.3$ where  $x^{[p]}:=x^{\{p\}}$.
\end{remark}

Next, we give an example of a restricted pre-Lie algebra which is
not a restricted pre-Lie algebra in the sense of Dzhumadil'daev's
Definition $1.3$ in \cite{Dz}.

\begin{example}
Let $K$ be a field with $char\,K=2$ and $P:=Kx+Ky$ be the $K$-vector
space generated by two generators $x,y$. We endow $P$ with the
following  pre-Lie product:
\begin{align*}
\{x,x\} &= 0,  \;\; \{x,y\}=y\\
\{y,x\} &=0,    \;\; \{y,y\}=0 \\
\end{align*}
The induced Lie algebra $(P, [-,-])$ is endowed with the Lie bracket
$[x,y]=y$. Also, $P$ is a restricted Lie algebra with $p$-map such
that $x^{[p]}=x$ and $y^{[p]}=0$. The map is extended by the formula
$(ax+by)^{[p]}=a^{2}x+(ab)y$. Moreover,

\begin{align*}
 \{(ax+by), \{(ax+by),y\}\} &= \{(ax+by), ay\}\\
                            &=a^{2}y\\
                            &=\{a^{2}x+(ab)y, y\}
\end{align*}
and
\begin{align*}
 \{(ax+by), \{(ax+by),x\}\} &= 0\\
                            &=\{a^{2}x+(ab)y, x\}
\end{align*}
It follows that the restricted Lie algebra structure on $(P, [-,-])$
is induced by a restricted pre-Lie algebra structure. Besides,
$$\{\{x,x\},y\}=\{0,y\}=0$$ and $\{x,\{x,y\}\}=y$. Therefore the above
restricted pre-Lie algebra is not a restricted pre-Lie algebra in
the sense of the Definition $1.3$ in \cite{Dz}.
\end{example}

We have seen that the structure of associative algebras in prime
characteristic, is the prototype in order to define the notion of
restricted Lie algebras. The question that arises naturally is: what
structure in the context of pre-Lie algebras will play the role of
associative algebras? As we will see below this role is played by
the structure of dendriform algebras introduced by J.-L. Loday in
\cite{L3}. Let us recall the definition of a dendriform algebra.

\subsection{Dendriform algebras}

In many cases in combinatorial constructions we obtain associative products which have the nice property to split into components.
This phenomenon arises also in other contexts and the category of dendrifrom algebras play a crucial role.

\begin{definition}
A \emph{dendriform algebra} $(D,\prec,\succ)$ is a $k$-vector space endowed with two binary operations
$\prec:\; D \rightarrow D$ and $\succ\; : D\rightarrow D$ such that the following relations hold:

\begin{align*}
(x\prec y)\prec z          &=x\prec (y\prec z+y\succ z),\\
(x\succ y)\prec z          &=x\succ (y\prec z),\\
(x\prec y+x\succ y)\succ z &= x\succ(y\succ z).
\end{align*}
for all elements $x,y,z \in D$.
\end{definition}

If we denote by $\star : D\rightarrow D$ the operation given by $x\star y:= x\prec y+ x\succ y$ then by Lemma $5.2$ in \cite{L3} we have that $(D,\star)$ is
an associative algebra. The above relations are concisely written as follows:
\begin{align}
(x\prec y)\prec z  &=x\prec (y\star z),\\
(x\succ y)\prec z  &=x\succ (y\prec z),\\
(x\star y)\succ z  &=x\succ(y\succ z).
\end{align}

\emph{Shuffle algebra.} J.-L. Loday proves  in \cite{L3} that the
shuffle algebra is a dendriform algebra. Indeed, the shuffle product
can be split in two products in a way that the axioms of dendriform
algebra are satisfied.

\begin{remark}
A dendriform algebra $D$ for which $x\succ y=y\prec x$ for all $x,y
\in D$, is called Zinbiel algebra. The notion of Zinbiel algebras is
related to the notion of commutative algebras with divided powers
(see \cite{Dokas}).
\end{remark}

\emph{Free Dendriform algebras}. There exists an explicit
description of free dendriform algebras in terms of planar binary
trees (cf. Theorem $5.8$ in \cite{L3}).

The next proposition is well known and proves that there is a functor from the category of dendriform algebras to the category of pre-Lie algebras.

\begin{proposition}
Let $(D,\prec,\succ)$ be a dendriform algebra. Then $D$ is equipped with a left pre-Lie algebra operation $\{-,-\}$ given by
$\{x,y\}:=x\succ y-y\prec x$ for all $x,y \in D$.
\end{proposition}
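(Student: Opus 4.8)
The plan is to verify the defining left pre-Lie identity directly from the formula $\{x,y\}:=x\succ y-y\prec x$, using only the three dendriform relations. Since the identity
$$\{x,\{y,z\}\}-\{\{x,y\},z\}=\{y,\{x,z\}\}-\{\{y,x\},z\}$$
asserts exactly that the associator $a(x,y,z):=\{x,\{y,z\}\}-\{\{x,y\},z\}$ is symmetric in its first two slots, it suffices to establish $a(x,y,z)=a(y,x,z)$ for all $x,y,z\in D$. First I would substitute the definition of the bracket everywhere and expand; since each of $\{y,z\}$ and $\{x,y\}$ unfolds into two dendriform products, the associator becomes a sum of eight terms:
\begin{align*}
a(x,y,z) &= x\succ(y\succ z)-x\succ(z\prec y)-(y\succ z)\prec x+(z\prec y)\prec x \\
&\quad -(x\succ y)\succ z+(y\prec x)\succ z+z\prec(x\succ y)-z\prec(y\prec x).
\end{align*}

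Next I would collapse these eight terms, applying exactly one dendriform relation to each of three natural families. The two ``$\succ\succ$'' terms are treated by $(x\star y)\succ z=x\succ(y\succ z)$: writing $x\succ(y\succ z)=(x\prec y)\succ z+(x\succ y)\succ z$ cancels the summand $-(x\succ y)\succ z$ and leaves $(x\prec y)\succ z$, which together with $(y\prec x)\succ z$ is manifestly invariant under $x\leftrightarrow y$. The two mixed terms $x\succ(z\prec y)$ and $(y\succ z)\prec x$ are interchanged by the relation $(x\succ y)\prec z=x\succ(y\prec z)$, so they form a symmetric pair. Finally the two ``$\prec$'' terms are handled by $(x\prec y)\prec z=x\prec(y\star z)$: expanding $(z\prec y)\prec x=z\prec(y\prec x)+z\prec(y\succ x)$ cancels $-z\prec(y\prec x)$ and leaves $z\prec(y\succ x)$, which together with $z\prec(x\succ y)$ is again $x\leftrightarrow y$–symmetric. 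Thus the reduced expression for $a(x,y,z)$ is invariant under the exchange of its first two arguments, and forming $a(x,y,z)-a(y,x,z)$ makes all reduced terms cancel in matching pairs.

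The computation is entirely mechanical, so the only real work is bookkeeping: tracking signs and recognizing which of the three relations disposes of each of the eight terms. I expect the main (and minor) obstacle to be organizing the reduction so that the symmetry under $x\leftrightarrow y$ is transparent rather than obscured by the many summands; the clean way to do this is to reduce $a(x,y,z)$ to the three representative contributions $(x\prec y)\succ z$, $z\prec(y\succ x)$ and the mixed term, and observe that each is carried to the corresponding term of $a(y,x,z)$ under $x\leftrightarrow y$. I note that no hypothesis on the characteristic is used here, and that associativity of $\star$ (Lemma $5.2$ of \cite{L3}) is not invoked separately, only the three defining dendriform relations.
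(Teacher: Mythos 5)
Your proof is correct, but there is nothing in the paper to compare it against: the paper states this proposition without proof, labelling it ``well known'' (the standard source being Loday \cite{L3}), so your verification supplies details the paper omits. The argument itself is the standard one and is carried out accurately: the eight-term expansion of the associator $a(x,y,z)$ is right, and each of your three reductions is a legitimate application of exactly one dendriform axiom --- collapsing $x\succ(y\succ z)-(x\succ y)\succ z$ to $(x\prec y)\succ z$ via $(x\star y)\succ z=x\succ(y\succ z)$, rewriting $(y\succ z)\prec x$ as $y\succ(z\prec x)$ via $(x\succ y)\prec z=x\succ(y\prec z)$, and collapsing $(z\prec y)\prec x-z\prec(y\prec x)$ to $z\prec(y\succ x)$ via $(x\prec y)\prec z=x\prec(y\star z)$. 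The resulting six-term expression $(x\prec y)\succ z+(y\prec x)\succ z-x\succ(z\prec y)-y\succ(z\prec x)+z\prec(x\succ y)+z\prec(y\succ x)$ is manifestly symmetric under $x\leftrightarrow y$, which is precisely the left pre-Lie identity. Your closing observations are also correct: no hypothesis on the characteristic is used (consistent with the paper invoking this proposition before any characteristic-$p$ assumption), and associativity of $\star$ is never needed, only the three defining relations.
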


Therefore there is a functor from the category of dendriform
algebras to the category of pre-Lie algebras.

\begin{remark}
If $(D,\prec, \succ)$ is a dendriform algebra, then the Lie
structures associated to $(D,\star)$ and $(D,\{-,-\})$ coincide.
\end{remark}

As a consequence, considering the associated categories of algebras we have the following proposition.

\begin{proposition}
The following diagram of categories of algebras is commutative

\[
\xymatrix{\rm{Dend} \ar[d]  \ar[r]  & \rm{preLie} \ar[d] \\
\rm{As} \ar[r]      & \rm{Lie} } \]
\end{proposition}

Next we prove a theorem analogue to the  Jacobson's Theorem $1.3$ in the pre-Lie algebra context.

\begin{theorem}
Let $(D,\prec,\succ)$ be a dendriform algebra over a field $k$ of characteristic $p\neq 0$. Then $(D,\{-,-\},(-)^{\star p})$ is a restricted pre-Lie algebra.
\end{theorem}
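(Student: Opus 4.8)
The plan is to exploit the associative product $\star$ defined by $x \star y := x \prec y + x \succ y$: by Lemma $5.2$ in \cite{L3} the pair $(D,\star)$ is an associative algebra, so Jacobson's Theorem $1.3$ already equips $(D,[-,-],(-)^{\star p})$ with the structure of a restricted Lie algebra. The task then reduces to checking the four relations of Definition $2.3$ against the dendriform axioms, and I expect most of them to come for free from this restricted Lie structure.

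First I would record the compatibility of the two brackets. By Proposition $2.8$ the pre-Lie product is $\{x,y\} = x \succ y - y \prec x$, whence $\{x,y\} - \{y,x\} = (x \prec y + x \succ y) - (y \prec x + y \succ x) = x \star y - y \star x$; that is, the bracket induced by $\{-,-\}$ is the commutator of $\star$ (Remark $2.9$). Granting this, the scalar relation $(\alpha x)^{\star p} = \alpha^{p} x^{\star p}$ and the additivity relation $(x+y)^{\star p} = x^{\star p} + y^{\star p} + \sum_{i=1}^{p-1} s_{i}(x,y)$ are exactly relations $(1.1)$ and $(1.3)$ of Jacobson's theorem for $(D,\star)$; since the $s_{i}$ are polynomials in the bracket and the two brackets agree, they are computed identically in either structure. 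This settles the first and fourth relations of Definition $2.3$.

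The heart of the proof is the second relation, $\{x^{\star p},y\} = \{x,\{x,\dots,\{x,y\}\dots\}\}$ ($p$-fold). I would introduce the operators $L,R \in \mathrm{End}_{k}(D)$ defined by $L(y) := x \succ y$ and $R(y) := y \prec x$, so that $\{x,-\} = L - R$ and the right-hand side is $(L-R)^{p}(y)$. Iterating the dendriform axiom $(x \star y)\succ z = x \succ(y \succ z)$ gives $x^{\star p} \succ y = L^{p}(y)$, and iterating $(x \prec y)\prec z = x \prec(y \star z)$ gives $y \prec x^{\star p} = R^{p}(y)$, so that $\{x^{\star p},y\} = L^{p}(y) - R^{p}(y)$. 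The remaining axiom $(x \succ y)\prec z = x \succ(y \prec z)$, specialized to $z$ equal to the fixed element $x$, reads $(x\succ y)\prec x = x\succ(y\prec x)$, which says precisely that $L$ and $R$ commute. As $L$ and $R$ are commuting $k$-linear operators and $\mathrm{char}\,k = p$, the freshman's dream yields $(L-R)^{p} = L^{p} - R^{p}$, and therefore $(L-R)^{p}(y) = L^{p}(y) - R^{p}(y) = \{x^{\star p},y\}$, as required. This is the step I expect to be the main obstacle: it is the only place where the three dendriform axioms and the characteristic-$p$ hypothesis are used in an essential way, and it hinges on the commutation $LR = RL$.

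The third relation, $\{y,x^{\star p}\} = \{x,\{x,\dots,\{x,y\}\dots\}\} - [x,[x,\dots,[x,y]\dots]]$, then follows formally. By the definition of the induced bracket, $\{y,x^{\star p}\} = \{x^{\star p},y\} - [x^{\star p},y]$. The first summand equals the $p$-fold pre-Lie bracket by the relation just established, while the second equals the $p$-fold iterated Lie bracket $[x,[x,\dots,[x,y]\dots]]$ by Jacobson's relation $(1.2)$ applied to $(D,\star)$, once more using that the two brackets coincide. Substituting gives exactly the third relation of Definition $2.3$; all four relations being verified, $(-)^{\star p}$ makes $(D,\{-,-\})$ a restricted pre-Lie algebra.
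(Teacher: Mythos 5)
Your proposal is correct and follows essentially the same route as the paper's own proof: the same operators $L$ and $R$, commutativity extracted from the middle dendriform axiom, the freshman's dream $(L-R)^{p}=L^{p}-R^{p}$ in characteristic $p$, iteration of the outer axioms to identify $L^{p}$ and $R^{p}$ with $x^{\star p}\succ(-)$ and $(-)\prec x^{\star p}$, and Jacobson's theorem for $(D,\star)$ to dispose of the scalar, additivity, and third relations. Your explicit remark that the $s_{i}$ agree because the two induced brackets coincide is a small point the paper leaves implicit, but it is not a different argument.
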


\begin{proof}
First we prove the relation $2.2$. Let $x,y\in D$. We denote by
$R_{x}$ the right $\prec$ action by $x$ namely $R_{x}(y):=y\prec x$.
Similarly, we denote by $L_{x}$ the left $\succ$ action by $x$, i.e
$L_{x}(y):=x\succ y$. From relation $2.6$ in the axioms of the
definition of dendriform algebras we obtain that
\begin{align*}
R_{x}(L_{x}(y)) &=(x\succ y)\prec x\\
                                &=x\succ(y\prec x)\\
                                &=L_{x}(R_{x}(y)).
\end{align*}
Besides, we have
\begin{align*}
\{x,y\} &=x\succ y-y\prec x \\
        &=L_{x}(y)-R_{x}(y)\\
        &=(L_{x}-R_{x})(y).
\end{align*}
Since $L_{x}$ and $R_{x}$ commute we obtain that $(L_{x}-R_{x})^{p}=\sum_{i=0}^{i=p}\binom{p}{i} (L_{x})^{i}(R_{x})^{p-i}$. Therefore in prime
characteristic $p$ we obtain that $(L_{x}-R_{x})^{p}=(L_{x})^{p}-(R_{x})^{p}$. Moreover from relation $2.7$ we have $(L_{x})^{p}(y)=x^{\star p}\succ y$. Besides
from relation $2.5$ we get $(R_{x})^{p}(y)=y \prec x^{\star p}$.

Finally, we have
\begin{align*}
(L_{x}-R_{x})^{p}(y) &=\{\underbrace{x,\{x,\cdots \{x}_{p},y\}\cdots \},\}\\
                                  &=(L_{x})^{p}-(R_{x})^{p}\\
                                  &=x^{\star p}\succ y-y \prec x^{\star p}\\
                                  &=\{x^{\star p},y\}.
\end{align*}

Now by Jacobson's Theorem $1.3$, the associative operation endows
$D$ with a Frobenius and therefore $(D,[-,-],(-)^{\star p})$ is a
restricted Lie algebra for the induced Lie bracket. Therefore the
relations $2.1$ and $2.4$ obviously follow.

Finally, we have $$[x^{\star p},y ]=[\underbrace{x[x[\cdots [x}_{p},y]]]]$$ and the relation $2.3$ is a consequence of
$[x^{\star p},y]=\{x^{\star p},y\}-\{y,x^{\star p}\}$.
\end{proof}

Therefore from Theorem $2.12$ we obtain, in prime characteristic,
the analogue of Proposition $2.11$.

\begin{proposition}
The following diagram of categories of algebras is commutative \
\[
\xymatrix{\rm{Dend} \ar[d]  \ar[r]  & p-\rm{preLie} \ar[d] \\
\rm{Ass} \ar[r]      & p-\rm{Lie} } \]
\end{proposition}

\begin{remark}
In prime characteristic, Turchin in \cite{Tour} proves in Theorem
$7.1$ that any brace algebra is equipped with the structure of a
restricted Lie algebra with $p$-map given by $a \mapsto a^{\{p\}}$.
Moreover, F. Chapoton in \cite{Chap} and M. Ronco \cite{Ron1} define
a functor of categories of algebras $\rm{Den} \rightarrow
\rm{Brace}$ lifting the functor $\rm{Den} \rightarrow \rm{preLie}$.
Therefore any dendriform algebra is equipped with the structure of a
restricted Lie algebra with $p$-map $a \mapsto a^{\{p\}}$. Thus if
$D\in \rm{Den}$ is a dendriform algebra the induced Lie algebra
$D_{Lie}$ is equipped with two restricted Lie algebra structures. It
follows that $a^{\star p}-a^{\{p\}} \in \mathcal{C}(D_{Lie})$, where
$a\in D_{Lie}$ and $\mathcal{C}(D_{Lie})$ denotes the center of
$D_{Lie}$.
\end{remark}

\section{Pre-Lie algebras with divided powers}

In this section we firstly recall some results from the theory of
operads and associated monads. For details, we refer the reader to
the book of J.-L. Loday and B. Vallette \cite{LV} and B. Fresse's
paper \cite{Fre1}. We prove that the free
$\Gamma(\calligra{preLie})$-algebra is a restricted pre-lie algebra.

First we recall the notion of a monad (cf. \cite{Mac}). A
\emph{monad} $(T,\eta,\gamma)$ in a category $\rm{C}$ consists of a
functor $T: \rm{C} \rightarrow \rm{C}$ and two natural
transformations $\eta: Id_{\rm{C}} \rightarrow T$ and $\gamma:
T\circ T \rightarrow T$ which make the following diagrams commute

\[
\begin{gathered}
\xymatrix{(T\circ T)\circ T \ar[d]^{\gamma T}  \ar[r]^{T\gamma}  & T\circ T \ar[d]^{\gamma} \\
T\circ T \ar[r]^{\gamma}      & T } \text{\qquad \qquad}
\xymatrix{IT \ar[rd] \ar[r]^{\eta T} &T\circ T \ar[d]^{\gamma}
&TI \ar[l]_{T\eta} \ar[ld]\\
&T}
\end{gathered}
\]
where $T\gamma: (T\circ T)\circ T \rightarrow T\circ T$ denotes the
natural transformation with components $(T\gamma)_{X}=T(\gamma_{X})$
and $\gamma T: (T\circ T)\circ T\rightarrow T\circ T$ has components
$(\gamma T)_{X}=\gamma_{T(X)}$, for each $X\in \rm{C}$. Moreover,
let $(T, \eta, \gamma)$ be a monad in a category $\rm{C}$. A
\emph{$T$-algebra} is an object $X\in \rm{C}$ together with an arrow
$h: T(X)\rightarrow X$ of $\rm{C}$ such that the following diagrams
commute

\[
\begin{gathered}
\xymatrix{(T\circ T)(X) \ar[d]^{\gamma_{X}}  \ar[r]^{Th}  & T(X) \ar[d]^{h} \\
T(X) \ar[r]^{h}      & X } \text{\qquad  \qquad} \xymatrix{ X
\ar[rd]^{id} \ar[r]^{\eta_{X}} & TX  \ar[d]^{h}
&    \\
&X}
\end{gathered}
\]

An $\mathbf{S}$-module $\mathcal{P}$ is a family of right
$S_{n}$-modules $\mathcal{P}(n)$ for $n\geq 0$. A morphism of
$\mathbf{S}$-modules is a family of $S_{n}$-equivariant maps. We
denote by $\mathbf{S}-Mod$ the category of $\mathbf{S}$-modules. To
any $\mathbf{S}$-module $\mathcal{P}$ is associated a functor
$T(\mathcal{P}): Vect \rightarrow Vect$ called \emph{Schur functor},
given by
$$T(\mathcal{P})(V):=\bigoplus_{n\geq
0}(\mathcal{P}(n)\otimes V^{\otimes n})_{S_{n}}$$

Moreover, B. Fresse in \cite{Fre1} defines the functor
$\Gamma(\mathcal{P}): Vect \rightarrow Vect$ given by

$$\Gamma(\mathcal{P})(V):=\bigoplus_{n\geq
0}(\mathcal{P}(n)\otimes V^{\otimes n})^{S_{n}}$$

In characteristic $0$, there is an isomorphism $T(\mathcal{P})(V)
\simeq \Gamma(\mathcal{P})(V)$ given by the norm map (cf.
\cite{Fre1}). There is a notion of \emph{composition} of
$\mathbf{S}$-modules (see $5.1.6$ in \cite{LV}). In particular, if
$M$ and $N$ are $\mathbf{S}$-modules, then the composite of $M$ and
$N$ is given by
$$M\circ N:=\bigoplus_{n\geq 0}M(n)\otimes_{S_{n}}N^{\otimes n}$$
By definition, an \emph{algebraic operad} is an $\mathbf{S}$-module
$\mathcal{P}$ equipped with an associative product $\gamma:
\mathcal{P}\circ \mathcal{P}\rightarrow \mathcal{P}$ together with
unit $\eta: I\rightarrow \mathcal{P}$ (see \cite{LV}). Let
$\mathcal{P}$ be an algebraic operad, then the associated Schur
functor $T(\mathcal{P}): \rm{Vect} \rightarrow \rm{Vect}$ is a monad
in $\rm{Vect}$. If $\mathcal{P}$ is an operad, then a
\emph{$\mathcal{P}$-algebra} is an algebra over the monad
$T(\mathcal{P})$. In characteristic $0$ all classical type of
algebras are algebras over the corresponding operad.

B. Fresse in $1.1.18$ \cite{Fre1} proves that $\Gamma(\mathcal{P})$
is a monad in $\rm{Vect}$ and he defines the notion of
\emph{$\mathcal{P}$-algebra with divided powers} as an algebra over
the monad $\Gamma(\mathcal{P})$. The norm map (cf. 1.1.14 in
\cite{Fre1}) is a morphism of monads $T(\mathcal{P})\rightarrow
\Gamma(\mathcal{P})$. Thus there is a forgetful functor from the
category of $\Gamma(\mathcal{P})$-algebras to the category of
$\mathcal{P}$-algebras.

In positive characteristic, the functors $T(\mathcal{P})$ and
$\Gamma(\mathcal{P})$ are different in general. Therefore the
notions of a $\mathcal{P}$-algebra and a $\mathcal{P}$-algebra with
divided powers are different in general. However, if
$\mathcal{P}(n)$ is $S_{n}$-projective, then the notions of a
$\mathcal{P}$-algebra with divided powers and a
$\mathcal{P}$-algebra, coincide. It follows that a
$\Gamma(\calligra{As})$-algebra and a
$\Gamma(\calligra{Dend})$-algebra don't carry more structure than an
associative algebra, and a dendriform algebra respectively. For the
case of the commutative operad $\calligra{Com}$ it follows from H.
Cartan's theory (see \cite{Car}, \cite{Fre1}) that a
$\Gamma(\calligra{Com})$-algebra is nothing but a commutative
algebra with divided powers. For the case of the Lie operad
$\calligra{Lie}$, B. Fresse in \cite{Fre1} proves that a
$\Gamma(\calligra{Lie})$-algebra is, nothing but a restricted Lie
algebra. A natural question arises about the structure of
$\Gamma(\calligra{preLie})$-algebras where $\calligra{preLie}$
denotes the pre-Lie operad.

\begin{proposition}
 The free $\Gamma(\calligra{preLie})$-algebra generated by a vector space is a restricted pre-Lie
 algebra.
\end{proposition}

\begin{proof}
We denote by $\Gamma(\calligra{preLie})(V)$ the free
$\Gamma(\calligra{preLie})$-algebra generated by a vector space $V$.
There is a commutative diagram of morphisms of operads

\[
\xymatrix{ \calligra{Dend\;}   & \calligra{preLie} \ar[l]\\
\calligra{As} \ar[u]      & \calligra{Lie}\ar[u] \ar@{_{(}->}[l] }
\]

Besides, F. Chapoton in \cite{Chap} and M. Ronco in \cite{Ron}
proved that the there is an injection of operads $\calligra{preLie}
\hookrightarrow \calligra{Dend}$. Moreover, F. Chapoton and M.
Livernet in \cite{CL} and M. Markl in \cite{Mar} study the morphism
of operads $\calligra{Lie} \rightarrow \calligra{preLie}$, (see also
\cite{Seg}). It is proved that, there also is an injection of
operads $\calligra{Lie} \hookrightarrow \calligra{preLie}$. Thus the
above diagram of operads induces  the following commutative diagram
of morphisms of monads
\[
\xymatrix{T(\calligra{Dend})(V)=\Gamma( \calligra{Dend})(V)   & \Gamma(\calligra{preLie})(V) \ar@{_{(}->}[l]\\
T(\calligra{As})(V)=\Gamma(\calligra{As})(V) \ar[u] &
\Gamma(\calligra{Lie})(V)\ar@{^{(}->}[u] \ar@{_{(}->}[l] }
\]
where $V\in Vect$. The norm map $T(\calligra{preLie})\rightarrow
\Gamma(\calligra{preLie})$ is a natural transformation and induces
on a $\Gamma(\calligra{preLie})$-algebra the structure of a
$T(\calligra{preLie})$-algebra. Moreover, we have the following
commutative diagram of morphisms of monads
 \[ \xymatrix{
T(\calligra{preLie}) \ar[rd] \ar[r]
&\Gamma(\calligra{preLie}) \ar@{_{(}->}[d]\\
& T(\calligra{Den})}
\]
Thus we get that $\Gamma(\calligra{preLie})(V)$ is a pre-Lie
sub-algebra of $T(\calligra{Den})(V)$. If $x\in V$ then by Lemma
$1.2.7$ in \cite{Fre1} we have that $x^{\star p}\in
\Gamma(\calligra{Lie})(V)\subset \Gamma(\calligra{preLie})(V)$.
Therefore from the above commutative square and Theorem $2.12$, we
obtain that $\Gamma(\calligra{preLie})(V)$ is endowed with the
structure of a restricted pre-Lie algebra.
\end{proof}

\section{Witt algebras, Rota Baxter algebras and infinitesimal bialgebras}

In this section, we give examples of restricted pre-Lie algebras. We
note that by Remark $2.5$, any example of a restricted pre-Lie
algebra in terms of the definition of A. Dzhumadil'daev in
\cite{Dz}, is an example of restricted pre-Lie algebra in terms of
the Definition $2.3$. In particular, Witt algebras are restricted
pre-Lie algebras.

\subsection{Witt algebras} In the late thirties  Witt gave an
example of a simple Lie algebra, which behaves completely
differently from classical simple Lie algebras. Let us recall the
definition of a Witt algebra.

Let $k$ be a field with $char\;k=p>3$, we consider the truncated
polynomial algebra $P:=k[X]/<X^{p}>$. Let $x$ be the image of $X$ in
$P$, the set $\{x^{i}, 0 \leq i < p\}$ forms a $k$-basis for $P$.
Let $W:=Der_{k}(P)$ be the set of derivations of the $k$-algebra
$P$. Then $W$ is a Lie subalgebra of $End_{k}(P)$ called the
\textit{Witt algebra}. Also, in prime characteristic, by the Leibniz
rule follows that $D^{p}$ is also a derivation. Thus the Witt
algebra $W$ is a restricted Lie subalgebra of $End_{k}(P)$. Besides,
if we denote by $d:=d/dX$, then $d(X^{p})=0$. Therefore $d$ induces
a derivation of $P$ which we denote by slight abuse of notation by
$d$. Let $e_{i}:=x^{i+1}d$, where $-1 \leq i \leq p-2$. Then it
follows that $W=\oplus_{-1 \leq i \leq p-2} ke_{i}$ and the Lie
bracket is given by:
\[
[e_{i},e_{j}]:=
\begin{cases}
(j-i)e_{i+j},\; -1 \leq i+j \leq p-2,\\
0, \; \text{otherwise}
\end{cases}
\]
Moreover, $W$ is endowed with the structure of a restricted Lie
algebra with $p$-map given by
\[
e_{i}^{[p]}=
\begin{cases}
e_{0},\; \text{if}\; i=0\\
0, \; \text{otherwise}
\end{cases}
\]

The restricted Lie algebra structure on $W$ is induced by a
restricted pre-Lie algebra structure, where the pre-Lie product is
given by:
\[
\{e_{i},e_{j}\}:=
\begin{cases}
(j+1)e_{i+j},\; -1 \leq i+j \leq p-2,\\
0, \; \text{otherwise}
\end{cases}
\]

and the $p$-map is such that $e_{i}^{[p]}=e_{i}^{\{p\}}$.

\subsection{The simple Lie algebra $\calligra{sl}$(2,k)}
The structure of pre-Lie algebras arise in the theory of affine
manifolds. One natural question is whether the Lie algebra of simply
connected Lie group admits a pre-Lie structure. The problem of
finding those Lie algebras which admit a structure of a pre-Lie
algebra has been studied  and references can be found in
\cite{Bur1}, \cite{Bur2}. In characteristic $0$, if $\mathbf{g}$ is
a semisimple algebra, it follows from Whitehead's lemma that
$\mathbf{g}$ does not admit any pre-Lie structure. This is not true
in prime characteristic. If $char\;k=p>2$, then D. Burde proves in
Proposition $2.1.1$ that, the classical simple Lie algebra
$\calligra{sl}(2,k)$ admits pre-Lie structures if and only if $p=3$.
In particular, let $char\;k=3$ and $\calligra{sl}(2,k)=kx\oplus ky
\oplus kz$ with Lie products given by $[x,y]=z,[z,x]=2x$ and
$[z,y]=-2y$. Then in \cite{Bur1} is given a pre-Lie structure for
$\calligra{sl}(2,k)$ defined by the following formulas

\begin{align*}
\{x,x\} &=0    &\{y,x\} &=0    &   \{z,x\} &=0    \\
\{x,y\} &=-2z  & \{y,y\} &=0   &   \{z,y\} &=2y               \\
\{x,z\} &=x    &  \{y,z\} &=y  &   \{z,z\} &=z
\end{align*}

The simple Lie algebra $\calligra{sl}(2,k)$ is a restricted Lie
algebra with $p$-map given by $$(\lambda x+\mu y+\kappa
z)^{[p]}=(\lambda \kappa^{2}+\lambda^{2}\mu)x+(\lambda \mu^{2}+\mu
\kappa^{2})y+ (\kappa^{3}+\lambda \kappa\mu)z$$

A simple calculation shows that this restricted Lie algebra
structure admits  a restricted pre-Lie algebra structure. In
particular, we have
\begin{align*}
\{(\lambda x\underbrace{+\mu y+\kappa z),\cdots,\{(\lambda x+\mu y+}_{3}\kappa z),x\}\}\} &=0\\
                                    &=\{(\lambda x+\mu y+\kappa
z)^{[p]},x\}
\end{align*}
Also,
\begin{align*}
 \{(\lambda x\underbrace{+\mu y+\kappa z),\cdots,\{(\lambda x+\mu y+}_{3}\kappa z),y\}\}\}= &\{(\lambda x+\mu y+\kappa
z), \{(\lambda x+\mu y+\kappa z),(\lambda z-\kappa y)\}\}\\
=&\{(\lambda x+\mu y+\kappa z),(\lambda^{2}x+(\mu\lambda+\kappa^{2})y\}\\
=&(\lambda^{2}\mu+\lambda \kappa^{2})z-(\mu\lambda\kappa+\kappa^{3})y\\
=&\{(\lambda x+\mu y+\kappa z)^{[p]},y\}
\end{align*}
Besides,
\begin{align*}
\{(\lambda x\underbrace{+\mu y+\kappa z),\cdots,\{(\lambda x+\mu
y+}_{3}\kappa z),z\}\}\}=& \{(\lambda x+\mu y+\kappa z),\{(\lambda
x+\mu y+\kappa z),(\lambda x+\mu y+ \kappa z)\}\}\\
=& \{(\lambda x+\mu y+\kappa z),(\lambda\mu+\kappa^{2})z+\lambda\kappa x \}\\
=&(\lambda \kappa^{2}+\lambda^{2}\mu)x+(\lambda \mu^{2}+\mu
\kappa^{2})y+ (\kappa^{3}+\lambda \kappa\mu)z\\
=&\{(\lambda x+\mu y+\kappa z)^{[p]},z\}
\end{align*}

Moreover, as a consequence of Theorem $2.12$ we obtain a large
amount of examples of restricted pre-Lie algebras.

\subsection{Rota-Baxter algebras} Gian-Carlo Rota introduced special types of operators (see in \cite{R1}) in the category of associative algebras
beyond the usual derivations. In particular, let $(A,\cdot)$ be an
associative algebra then an operator $\beta: A\rightarrow A$ is
called \emph{Rota-Baxter} operator if $$\beta (x) \cdot \beta
(y):=\beta(\beta(x)\cdot y+x\cdot\beta(y)).$$ An associative algebra
$(A,\beta)$ equipped with a Rota-Baxter operator $\beta$ is called a
Rota-Baxter algebra. Rota-Baxter algebras appear in many fields of
theoretical physics and mathematics.

\begin{proposition}
A Rota Baxter algebra $(A,\cdot,\beta)$ over a field $k$ of characteristic $p\neq 0$ is equipped with the structure of a restricted pre-Lie algebra.
\end{proposition}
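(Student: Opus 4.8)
The plan is to reduce the statement to Theorem 2.11 by exhibiting a dendriform structure on $A$ built out of the associative product and the Rota-Baxter operator. Concretely, I would set
\[
x\prec y := x\cdot\beta(y),\qquad x\succ y := \beta(x)\cdot y,
\]
so that the associated sum operation is $x\star y = x\prec y + x\succ y = x\cdot\beta(y)+\beta(x)\cdot y$. With this choice the defining Rota-Baxter identity $\beta(x)\cdot\beta(y)=\beta(\beta(x)\cdot y + x\cdot\beta(y))$ takes the compact form $\beta(x)\cdot\beta(y)=\beta(x\star y)$, which is the shape in which it will be used repeatedly.

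The core of the argument is then to verify that $(A,\prec,\succ)$ satisfies the three dendriform axioms $(2.5)$--$(2.7)$; this is the only place requiring computation and the only place where the Rota-Baxter relation is invoked. For axiom $(2.5)$ I would expand both sides to $x\cdot\beta(y)\cdot\beta(z)$, using associativity of $\cdot$ on the left-hand side and the identity $\beta(y\star z)=\beta(y)\cdot\beta(z)$ on the right-hand side. Axiom $(2.6)$ is a pure consequence of associativity of $\cdot$, since both sides reduce to $\beta(x)\cdot y\cdot\beta(z)$ with no appeal to $\beta$ being Rota-Baxter. For axiom $(2.7)$ I would again use $\beta(x\star y)=\beta(x)\cdot\beta(y)$ to bring both sides to $\beta(x)\cdot\beta(y)\cdot z$.

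Once $(A,\prec,\succ)$ has been shown to be a dendriform algebra, the conclusion is immediate from Theorem 2.11: the space $A$ acquires the structure of a restricted pre-Lie algebra $(A,\{-,-\},(-)^{\star p})$, where by Proposition 2.8 the pre-Lie product is $\{x,y\}=x\succ y-y\prec x=\beta(x)\cdot y-y\cdot\beta(x)$ and the $p$-map is the $p$-th power taken with respect to $\star$. The main, and essentially only, obstacle is the axiom verification above; beyond it no genuine difficulty remains, since all of the restricted structure, including the three identities governing the $p$-map, is furnished directly by the theorem applied to the dendriform algebra just constructed.
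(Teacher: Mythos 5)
Your proposal is correct and takes essentially the same route as the paper: the paper equips $A$ with the identical dendriform structure $x\prec y := x\cdot\beta(y)$, $x\succ y := \beta(x)\cdot y$ (citing Aguiar's Proposition 4.5 instead of verifying the three axioms directly, as you do) and then invokes the Jacobson-type theorem for dendriform algebras. Your explicit axiom check is a correct inlining of Aguiar's result, and your identification of the $p$-map as the $p$-th power with respect to $\star$ (rather than $\cdot$) is if anything stated more carefully than the paper's notation $(-)^{\cdot p}$.
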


\begin{proof}
From Aguiar' s Proposition $4.5$ in \cite{Ag} any Rota Baxter
algebra is equipped with the structure of a dendriform algebra with
operations given by $$x\succ y:=\beta(x)\cdot y$$ and $$x\prec
y:=x\cdot \beta(y).$$ Therefore by Theorem $2.12$ the Rota Baxter
algebra $(A,\beta)$  is equipped with the structure of a restricted
pre-Lie algebra $(A,\{-,-\},(-)^{\cdot p})$ where the pre-Lie
bracket is given by $\{x,y\}:=\beta (x)\cdot y-y\cdot \beta(x)$.
\end{proof}

\subsection{Quasitriangular algebras} Infinitesimal algebras were introduced by S. A. Joni and Gian-Carlo Rota in \cite{JR}. An infinitesimal bialgebra or an $\epsilon$-bialgebra
is a triple $(A,\mu,\Delta)$ where $(A,\mu)$ is an associative
algebra, $(A,\Delta)$ is a coassociative coalgebra, and $\Delta$ is
a derivation. A special class of $\epsilon$-bialgebras are called
quasitriangular $\epsilon$-bialgebras. M. Aguiar in \cite{Ag} proves
that, there is a functor from the category of quasitriangular
$\epsilon$-bialgebras to the category of dendriform algebras. Thus
by Theorem $2.12$ we obtain the following proposition:

\begin{proposition}
Any quasitriangular $\epsilon$-bialgebra over a field $k$ of characteristic $p\neq 0$ is equipped with the structure of a restricted pre-Lie algebra.
\end{proposition}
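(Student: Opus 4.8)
The plan is to realize this statement as the composition of two facts that are already at hand: Aguiar's construction of a dendriform structure on a quasitriangular $\epsilon$-bialgebra, and Theorem 2.12, which upgrades any dendriform algebra over a field of characteristic $p\neq 0$ to a restricted pre-Lie algebra. Since Theorem 2.12 is already established, the only work is to exhibit the dendriform structure and feed it into the theorem; the restricted $p$-map is then automatically the Frobenius $(-)^{\star p}$ attached to the underlying associative product $\star$ with $x\star y := x\prec y + x\succ y$. This mirrors exactly the strategy used for Rota--Baxter algebras in Proposition 3.1.

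First I would recall Aguiar's theorem from \cite{Ag}: if $(A,\mu,\Delta)$ is a quasitriangular $\epsilon$-bialgebra, the coalgebra structure is governed by a solution $r=\sum_i u_i\otimes v_i$ of the associative Yang--Baxter equation, and from $r$ and $\mu$ one defines two operations $x\prec y$ and $x\succ y$. Aguiar verifies that these satisfy the three dendriform axioms, the verification resting precisely on the associative Yang--Baxter identity. This produces a dendriform algebra $(A,\prec,\succ)$, and in fact a functor from the category of quasitriangular $\epsilon$-bialgebras to $\mathrm{Dend}$; this is the content we are permitted to cite, so no axiom-checking is reproduced here.

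Then I would simply apply Theorem 2.12 to $(A,\prec,\succ)$: over a field of characteristic $p\neq 0$ it endows $A$ with the structure of a restricted pre-Lie algebra $(A,\{-,-\},(-)^{\star p})$, where $\{x,y\}=x\succ y-y\prec x$. This establishes the claim. I do not expect a genuine obstacle: everything reduces to the two cited results, and no fresh computation with the $p$-map is needed, since relation (2.2) and the Frobenius compatibility were already discharged inside the proof of Theorem 2.12. The only mild point of bookkeeping is to note that the associative product $\star$ arising from Aguiar's $\prec,\succ$ is the ambient associative multiplication on $A$, so that the Frobenius $(-)^{\star p}$ is the expected one; but this is part of Aguiar's functoriality statement rather than something to prove anew.
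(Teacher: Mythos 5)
Your proposal is correct and follows exactly the paper's own argument: the paper proves this proposition precisely by citing Aguiar's functor from quasitriangular $\epsilon$-bialgebras to dendriform algebras and then invoking Theorem 2.12, with no further computation. One caveat on your closing ``bookkeeping'' remark: it is not true that the associative product $\star = \prec + \succ$ attached to Aguiar's dendriform structure is the ambient multiplication $\mu$ of the $\epsilon$-bialgebra. Aguiar's construction passes through the Rota--Baxter operator $\beta(x)=\sum_i u_i x v_i$ built from the $r$-matrix, so $x\star y = x\cdot\beta(y)+\beta(x)\cdot y$, which is the double product of the Rota--Baxter structure and in general differs from $\mu$; hence the $p$-map supplied by Theorem 2.12 is the Frobenius of $\star$, not of $\mu$. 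Fortunately the proposition asserts only the existence of a restricted pre-Lie structure, with no claim about which associative product underlies the $p$-map, so this misstatement does not affect the validity of your proof.
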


\section{Restricted enveloping dendriform algebra}

It is known (see section $3$ in \cite{Fre2}, \cite{Jones}) that
given a morphism of operads $\mathcal{P} \rightarrow \mathcal{Q}$
the ``forgetful functor"  from $\mathcal{Q}$-algebras to
$\mathcal{P}$-algebras admits a left adjoint given by a coequalizer.
As an application of this general result (see e.g. Proposition $7$
in \cite{Chap}) one obtains the enveloping dendriform algebra of a
pre-Lie algebra and a left adjoint to the functor
$(-)_{\textrm{preLie}}: Dend\rightarrow preLie.$

\begin{definition}
Let $P$ be a pre-Lie algebra. A universal enveloping dendriform
algebra of $P$ is a pair $(U,i)$, where $U$ is a dendriform algebra,
$i: P \rightarrow U$ is a pre-Lie homomorphism and the following
holds: for any dendriform algebra $A$ and any pre-Lie homomorphism
$f: P\rightarrow A_{preLie}$ there exists a unique dendriform
homomorphism $\theta: U\rightarrow A$ such that $\theta i=f$.
\end{definition}

Let $P$ be a pre-Lie algebra we consider the free dendriform algebra
$Dend(P)$ generated by the vector space $P$. Let $\mathcal{R}$ be
the dendriform ideal of $Dend(P)$ generated by the elements: $$\{
\{x,y\}-(x\succ y-y\prec x)\}$$ where $x,y \in P$. We denote by
$U(P):=Dend(P)/\mathcal{R}$. From the general theory follows the
next statement.

\begin{proposition}
The functor $U: \textrm{preLie} \rightarrow \textrm{Dend}$ is left adjoint to the
functor $(-)_{\textrm{preLie}}: Dend\rightarrow preLie$. Therefore we have:
$$Hom_{preLie}(P,A_{preLie})\simeq Hom_{Dend}(U(P),A)$$
\end{proposition}

Similarly to the notion of restricted enveloping algebra in the Lie
context, we define the notion of \emph{restricted enveloping
dendriform algebra} for a restricted pre-Lie algebra. Thus we
construct a left adjoint functor $$U_{p}: p-\textrm{preLie}
\rightarrow \textrm{Dend}$$ to the functor
$$(-)_{p-\textrm{preLie}}: \textrm{Dend} \rightarrow
p-\textrm{preLie}$$

\begin{definition}
Let $P$ be a restricted pre-Lie algebra. A universal restricted
enveloping dendriform algebra of $P$ is a pair $(U_{p},i)$, where
$U_{p}$ is a dendriform algebra, $i: P \rightarrow U_{p}$ is a
restricted pre-Lie homomorphism and the following holds: for any
dendriform algebra $A$ and any restricted pre-Lie homomorphism $f:
P\rightarrow A_{p-preLie}$ there exist a unique dendriform
homomorphism $\theta: U_{p}\rightarrow A$ such that $\theta i=f$.
\end{definition}

Let $(P,(-)^{[p]})$ be a restricted pre-Lie algebra, we consider the
free dendriform $Dend(P)$ algebra generated by the vector space $P$.
Let $\mathcal{R}_{p}$ be the dendriform ideal of $Dend(P)$ generated
by the elements: $$\{ \{x,y\}-(x\succ y-y\prec x), x^{[p]}-x^{\star
p}\}$$ where $x,y \in P$. We denote by
$U_{p}(P):=Dend(P)/\mathcal{R}_{p}$. Let $i:P \rightarrow U_{p}(P)$
be the restriction to $P$ of the canonical homomorphism of $Dend(P)$
onto $U_{p}(P)$.

\begin{proposition}
Let $(P,(-)^{[p]})$ be a restricted pre-Lie algebra. The pair
$(U_{p}(P),i)$ is a universal restricted enveloping dendriform
algebra for $P$.
\end{proposition}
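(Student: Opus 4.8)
The plan is to follow the proof of Proposition 4.2 verbatim and then add the single extra verification that the Frobenius generators of $\mathcal{R}_p$ are also annihilated. So first I would start from an arbitrary dendriform algebra $A$ together with a restricted pre-Lie homomorphism $f: P\to A_{p-preLie}$. Since $Dend(P)$ is the free dendriform algebra on the vector space $P$, the underlying linear map $f$ extends uniquely to a dendriform homomorphism $\theta': Dend(P)\to A$. The whole problem then reduces to showing that $\theta'$ vanishes on the ideal $\mathcal{R}_p$, for then $\theta'$ descends to the quotient $U_p(P)=Dend(P)/\mathcal{R}_p$.

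Because $\mathcal{R}_p$ is generated as a dendriform ideal by two families of elements and $\theta'$ is a dendriform homomorphism, it suffices to check that $\theta'$ kills each generator. For the pre-Lie generators $\{x,y\}-(x\succ y-y\prec x)$ the computation is exactly the one carried out in the proof of Proposition 4.2 and uses only that $f$ preserves the pre-Lie bracket. For the new generators $x^{[p]}-x^{\star p}$ I would argue as follows: since $x^{[p]}\in P$ and $\theta'$ extends $f$, we have $\theta'(x^{[p]})=f(x^{[p]})=f(x)^{[p]}$, the last equality because $f$ is a restricted homomorphism; and by Theorem 2.12 the $p$-map on $A_{p-preLie}$ is the Frobenius $(-)^{\star p}$, so this equals $f(x)^{\star p}$. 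On the other hand $\theta'(x^{\star p})=\theta'(x)^{\star p}=f(x)^{\star p}$, because $\theta'$ preserves $\prec$ and $\succ$ and hence the associated associative product $\star$. The two values agree, so $\theta'(x^{[p]}-x^{\star p})=0$.

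Having shown that $\theta'$ vanishes on both families of generators, it vanishes on the entire dendriform ideal they generate, so it factors through the canonical projection to give a dendriform homomorphism $\theta: U_p(P)\to A$ with $\theta i=f$. For uniqueness I would invoke that $i(P)$ generates $U_p(P)$ as a dendriform algebra, so the condition $\theta i=f$ pins down $\theta$ on a generating set and therefore everywhere. I would also record that $i$ is itself a restricted pre-Lie homomorphism: the two defining relations guarantee that in $U_p(P)$ the pre-Lie bracket coincides with $x\succ y-y\prec x$ and that $x^{[p]}$ coincides with the Frobenius $x^{\star p}$, so $i$ respects both the bracket and the $p$-map.

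There is no serious obstacle here, as the argument is essentially routine once Proposition 4.2 is in hand. The one point that genuinely needs care is the chain of equalities $\theta'(x^{[p]})=f(x)^{[p]}=f(x)^{\star p}=\theta'(x^{\star p})$: one must keep straight that $x^{\star p}$ is computed inside $Dend(P)$ using its derived $\star$-structure, while $f(x)^{[p]}$ is the $p$-map of the restricted pre-Lie algebra $A_{p-preLie}$, and it is precisely Theorem 2.12 that identifies this $p$-map with the Frobenius of $(A,\star)$. Matching these two descriptions of the $p$-power is the only place where attention is required.
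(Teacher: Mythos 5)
Your proof is correct and takes essentially the same approach as the paper's: the decisive step in both is the chain $\theta'(x^{[p]})=f(x^{[p]})=f(x)^{[p]}=f(x)^{\star p}=\theta'(x)^{\star p}$, which uses that $f$ is restricted together with Theorem 2.12 identifying the $p$-map of $A_{p\textrm{-}preLie}$ with the Frobenius of $(A,\star)$. The only difference is organizational: the paper gets $\theta'$ from the universal property of $U(P)$ (Proposition 4.2) and so only needs to kill the new generators $x^{[p]}-x^{\star p}$, whereas you extend $f$ from the free algebra $Dend(P)$ and check both families of generators of $\mathcal{R}_p$; you also spell out the uniqueness of $\theta$ and the fact that $i$ is a restricted morphism, which the paper leaves implicit.
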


\begin{proof}
Let $f: P\rightarrow A_{p-preLie}$ be a restricted pre-Lie
homomorphism, where $A$ is a dendriform algebra. From the universal
property of $U(P)$ there is a dendriform homomorphism $\theta':U(P)
\rightarrow A$ which extends $f$. Moreover we have:

\begin{align*}
\theta'(x^{[p]})-\theta' (x^{\star p})&=f(x^{[p]})-\theta'(x)^{\star p}\\
&=f(x)^{\star p}-\theta'(x)^{\star p}\\
&=0
\end{align*}
The homomorphism $\theta'$ induces a unique dendriform homomorphism
$\theta: U_{p}(P)\rightarrow A$ such that $\theta i=f$.
\end{proof}

\begin{corollary}
The functor $U_{p}: p-\textrm{preLie} \rightarrow \textrm{Dend}$ is left adjoint to the
functor $(-)_{p-\textrm{preLie}}$. Therefore we have:
$$Hom_{p-preLie}(P,A_{p-preLie})\simeq Hom_{Dend}(U_{p}(P),A).$$
\end{corollary}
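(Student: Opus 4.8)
The plan is to obtain the adjunction as a purely formal consequence of the universal property established in Proposition $5.3$, in exact parallel with the way Corollary $4.4$ follows from Proposition $4.2$. First I would pin down the candidate natural bijection. Let $i: P \rightarrow U_{p}(P)_{p-preLie}$ be the canonical map; the defining relations of the ideal $\mathcal{R}_{p}$ force the image of $\{x,y\}$ to coincide with $x\succ y - y\prec x$ and the image of $x^{[p]}$ to coincide with $x^{\star p}$ in $U_{p}(P)$, so $i$ respects both the pre-Lie bracket and the $p$-map, i.e. it is a restricted pre-Lie homomorphism. For a dendriform algebra $D$, I would then send a dendriform homomorphism $\theta: U_{p}(P) \rightarrow D$ to the composite $\theta_{p-preLie} \circ i$, which lies in $\textrm{Hom}_{p-preLie}(P, D_{p-preLie})$ because it is a composite of restricted pre-Lie homomorphisms.

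Next I would show this assignment is a bijection by invoking Proposition $5.3$ directly. Given any restricted pre-Lie homomorphism $f: P \rightarrow D_{p-preLie}$, the universal property yields a unique dendriform homomorphism $\theta: U_{p}(P) \rightarrow D$ with $\theta i = f$; existence gives surjectivity of $\theta \mapsto \theta i$ and uniqueness gives its injectivity. Hence $$\textrm{Hom}_{Dend}(U_{p}(P), D) \simeq \textrm{Hom}_{p-preLie}(P, D_{p-preLie})$$ as sets.

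Finally I would verify naturality in both arguments. Naturality in $D$ is immediate: for a dendriform homomorphism $g: D \rightarrow D'$ one has $(g\theta)_{p-preLie} \circ i = g_{p-preLie} \circ (\theta_{p-preLie} \circ i)$, which is just associativity of composition together with functoriality of $(-)_{p-preLie}$ from Theorem $2.12$. Naturality in $P$ requires first that $U_{p}$ itself be a functor: for a restricted pre-Lie homomorphism $h: P' \rightarrow P$, the composite $i_{P}\circ h: P' \rightarrow U_{p}(P)_{p-preLie}$ is a restricted pre-Lie homomorphism, so the universal property of $U_{p}(P')$ produces a unique dendriform homomorphism $U_{p}(h): U_{p}(P') \rightarrow U_{p}(P)$ satisfying $U_{p}(h)\circ i_{P'} = i_{P}\circ h$; the same uniqueness clause makes $U_{p}$ preserve identities and composition. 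The compatibility square for naturality in $P$ then commutes on the nose from this last intertwining relation. The only step demanding genuine attention is this verification that $U_{p}$ is functorial and that the transition maps $U_{p}(h)$ intertwine the canonical inclusions $i_{P'}$ and $i_{P}$; everything else is routine bookkeeping around the single universal property of Proposition $5.3$.
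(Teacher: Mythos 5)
Your proof is correct and takes essentially the same route as the paper: the paper states this corollary with no proof at all, treating it as an immediate formal consequence of the universal property of $(U_{p}(P),i)$, and your argument is exactly that standard derivation written out in full (the bijection $\theta \mapsto \theta_{p-preLie}\circ i$, surjectivity from existence, injectivity from uniqueness, and the naturality/functoriality of $U_{p}$ via the universal property). The only blemishes are off-by-one citations: the universal property is the paper's Proposition $5.2$ (not $5.3$) and the section-$4$ analogue is Corollary $4.3$ (not $4.4$).
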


\bibliographystyle{plain}

\end{document}